\documentclass{amsart}
\usepackage{epsfig}
\usepackage{url}	

\newtheorem{theorem}{Theorem}[section]
\newtheorem{lemma}[theorem]{Lemma}

\theoremstyle{definition}

\theoremstyle{remark}

\numberwithin{equation}{section}

\begin{document}

\title[Multiple harmonic star sums]{Multiple harmonic star sums involving
central binomial coefficients as weights}

\author{Karl Dilcher}
\address{Department of Mathematics and Statistics\\
 Dalhousie University\\
         Halifax, Nova Scotia, B3H 4R2, Canada}
\email{dilcher@mathstat.dal.ca}

\author{Christophe Vignat}
\address{CentraleSup\'elec, Universit\'e Paris-Saclay, Gif-sur-Yvette, France and Department of
Mathematics, Tulane University, New Orleans, LA 70118, USA}
\email{cvignat@tulane.edu}
\keywords{Multiple harmonic sums, central binomial coefficients, combinatorial
identities.}
\subjclass[2010]{Primary: 05A19; Secondary: 05A10, 33E20.}
\thanks{The first author was supported in part by the Natural Sciences and 
Engineering Research Council of Canada.}


\setcounter{equation}{0}

\begin{abstract}
We consider weighted analogues of multiple harmonic and multiple $t$-harmonic
star sums, where the ``star" indicates non-strict inequalities between the 
summation indices. In the special case where all exponents are 2, which is of
particular interest also for ordinary multiple ($t$-)harmonic star sums, we 
obtain evaluations in terms of simple finite sums. The proofs are based on
several combinatorial identities which are of interest in their own right.
\end{abstract}

\maketitle

\section{Introduction}\label{sec:1}

The ordinary harmonic numbers, which are usually denoted by 
\begin{equation}\label{1.1}
H_n:=\sum_{k=1}^n\frac{1}{k}\qquad (n\geq 1),
\end{equation}
belong to the most important objects in number theory, combinatorics and
other parts of pure and applied mathematics. Among various 
generalizations are the multiple harmonic sums, which themselves occur in 
different variants. The ordinary {\it multiple harmonic sum} is 
defined by
\begin{equation}\label{1.2}
H_n(s_1,\ldots,s_r):=\sum_{1\leq k_1<\cdots<k_r\leq n}
\frac{1}{k_1^{s_1}\cdots k_r^{s_r}},
\end{equation}
and its ``star version" by
\begin{equation}\label{1.3}
H_n^*(s_1,\ldots,s_r):=\sum_{1\leq k_1\leq\cdots\leq k_r\leq n}
\frac{1}{k_1^{s_1}\cdots k_r^{s_r}},
\end{equation}
The odd analogues of the sums have recently been called {\it multiple
$t$-harmonic sums},
\begin{equation}\label{1.4}
t_n(s_1,\ldots,s_r):=\sum_{1\leq k_1<\cdots<k_r\leq n}
\frac{1}{(2k_1-1)^{s_1}\cdots(2k_r-1)^{s_r}},
\end{equation}
along with their star version $t_n^*(s_1,\ldots,s_r)$. To all these concepts
there are alternating versions and $q$-analogues which have been studied in the
recent literature. Finally, the limits as $n\to\infty$ lead to related
versions of multiple zeta values, which have also been of great interest in
recent years. Further details on all this can be found, for instance, in 
\cite{HHT} or \cite{LY1}. 

In the various studies of the four main types of multiple harmonic sums
mentioned above, the exponent $s_j=2$ plays a particularly important role; 
see again \cite{HHT} and \cite{LY1} 
and relevant extensions and generalizations such as \cite{Zh} and \cite{LY2}.
In the present paper we will mainly consider the case where {\it all} exponents
are $s_1=\cdots =s_r=2$. Throughout this paper we use the standard notation 
$\{2\}^r$ for the $r$-tuple $(2,\ldots,2)$. 

We first remark that the sums $H_n(\{2\}^r)$ and $t_n(\{2\}^r)$ occur in the
Maclaurin series expansions of positive integer powers of inverse 
trigonometric function; see Section~\ref{sec:4} below. 

We now turn to the star versions of the multiple harmonic and multiple
$t$-harmonic sums. Only in recent years it was shown that there are explicit
evaluations in several infinite classes. They are, however, still quite 
restricted and mainly involve the exponents $s_j=2$ in the denominators; see 
again \cite{HHT}, \cite{LY1}, \cite{LY2}, and \cite{Zh}. When all exponents 
are 2, these evaluations become especially simple and pleasing. In particular, 
as a consequence of Theorem~2.3 in \cite{HHT} we have
\begin{equation}\label{1.5}
H_n^*(\{2\}^r)=2\sum_{k=1}^n\frac{(-1)^{k-1}\binom{n}{k}}{\binom{n+k}{k}k^{2r}},
\end{equation}
and from the special case $d=0$ in \cite[p.~1102]{LY1} it follows that
\begin{equation}\label{1.6}
t_n^*(\{2\}^r)=\frac{n\binom{2n}{n}}{2^{4n-3}}
\sum_{k=1}^n\binom{2n-1}{nk}\frac{(-1)^{k-1}}{(2k-1)^{2r}}.
\end{equation}

It is the main purpose of this paper to prove the following variants
of the identities \eqref{1.5} and \eqref{1.6}.

\begin{theorem}\label{thm:1.1}
For all positive integers $n$ and $r$ we have
\begin{equation}\label{1.7}
\sum_{1\leq k_1\leq\cdots\leq k_r\leq n}\frac{4^{k_1}}{\binom{2k_1}{k_1}}\cdot
\frac{1}{k_1^2\cdots k_r^2} = \frac{4}{\binom{2n}{n}}
\sum_{k=0}^{\lfloor\frac{n-1}{2}\rfloor}\frac{\binom{2n}{n-2k-1}}{(2k+1)^{2r}}.
\end{equation}
\end{theorem}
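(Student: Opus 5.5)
The plan is induction on $r$, using the recursive structure of the star sum. Write $S_r(n)$ for the left-hand side of \eqref{1.7} and $T_r(n)$ for the right-hand side. Separating off the largest index $k_r$ gives the recursion
\[
S_r(n)=S_r(n-1)+\frac{1}{n^2}\,S_{r-1}(n)\qquad(r\ge 2),
\]
with $S_r(0)=0$. To make $r=1$ part of the same scheme, set $S_0(n):=4^n/\binom{2n}{n}$. Then $S_1(n)=\sum_{k=1}^n S_0(k)/k^2$ satisfies the same recursion. It therefore suffices to show three things:
\begin{enumerate}
\item $T_0(n)=4^n/\binom{2n}{n}$;
\item $T_r(0)=0$, which holds trivially since the sum is empty;
\item $T_r(n)-T_r(n-1)=T_{r-1}(n)/n^2$ for all $n,r\ge 1$.
\end{enumerate}

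For the base case $r=0$, I need $\sum_{k\ge0}\binom{2n}{n-2k-1}=4^{n-1}$. The indices $j=n-2k-1$ run over all $j<n$ with $j\not\equiv n\pmod 2$. The binomial coefficients $\binom{2n}{j}$ with $j$ of a fixed parity sum to $2^{2n-1}$. By the symmetry $j\mapsto 2n-j$, which preserves parity, this set splits evenly into $j<n$ and $j>n$. The term $j=n$ is not in this parity class. Hence the sum is $2^{2n-2}$, and so $T_0(n)=4\cdot 4^{n-1}/\binom{2n}{n}=4^n/\binom{2n}{n}$.

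For the inductive step, the key observation is that the recursion should hold termwise in $k$. Put $m=2k+1$ and $a_m(n):=\binom{2n}{n-m}\big/\binom{2n}{n}=\frac{n!^2}{(n-m)!(n+m)!}$. A direct computation of factorial ratios gives
\[
\frac{a_m(n-1)}{a_m(n)}=\frac{(n-m)(n+m)}{n^2}=1-\frac{m^2}{n^2},
\qquad\text{hence}\qquad a_m(n)-a_m(n-1)=\frac{m^2}{n^2}\,a_m(n).
\]
Dividing by $m^{2r}$ and summing over odd $m$ yields exactly $T_r(n)-T_r(n-1)=T_{r-1}(n)/n^2$.

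The only point needing care is the summation range. $T_r(n)$ runs up to $m\le n$, while $T_r(n-1)$ runs only to $m\le n-1$. The extra term $m=n$, which occurs when $n$ is odd, is harmless: $a_n(n-1)$ involves $\binom{2n-2}{-1}=0$, and the ratio identity still holds there because $n-m=0$. I expect this boundary bookkeeping to be the only real obstacle; it is settled by adopting the convention that $\binom{a}{b}=0$ for $b<0$. With that convention, induction on $r$ (and on $n$ for fixed $r$) gives \eqref{1.7}.
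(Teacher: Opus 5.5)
Your proof is correct. It shares the paper's inductive step but reaches the base case by a genuinely different and more elementary route.

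The paper also inducts on $r$, but it starts at $r=1$, where \eqref{1.7} is Lemma~2.1. That lemma is obtained by evaluating $\sum_{k\ge1}4^k/\bigl(\binom{2k}{k}k(2k+2n)\bigr)$ in two ways, quoting results from the authors' earlier papers, and comparing rational parts. Its inductive step writes the left side as $\sum_{j\le n}S_{r-1}(j)/j^2$, inserts the hypothesis, interchanges the sums, and applies Lemma~2.2. In your notation that lemma reads $\sum_{j=m}^{n}a_m(j)/j^2=a_m(n)/m^2$. Your identity $a_m(n)-a_m(n-1)=\frac{m^2}{n^2}a_m(n)$, with $a_m(m-1)=0$, is exactly the differenced form of that lemma. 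So the inductive steps agree in substance; you organize yours as a matching recursion in $n$ instead of an interchange of summations.

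The real gain is your extension to $r=0$. It reduces the base case to the elementary parity sum $\sum_{j<n,\ j\not\equiv n \pmod 2}\binom{2n}{j}=4^{n-1}$. Lemma~2.1 then follows from the same telescoping, and the whole argument is self-contained, with no infinite series or external evaluations. The paper's route ties the identity to the authors' series evaluations, which is of independent interest but not logically needed for the theorem.

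The same device also works for Theorem~1.2. With $S_0(n)=\binom{2n}{n}n/4^n$, the $r=0$ case reduces to $\sum_{j=0}^{n-1}\binom{2n-1}{j}=4^{n-1}$. This would replace the series argument behind Lemma~2.3.
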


The left-hand side of \eqref{1.7} can be seen as a weighted analogue of the
multiple harmonic star sum in \eqref{1.3}. Similarly, the next result involves
a weighted analogue of the multiple $t$-harmonic star sum.

\begin{theorem}\label{thm:1.2}
For all positive integers $n$ and $r$ we have
\begin{equation}\label{1.8}
\sum_{1\leq k_1\leq\cdots\leq k_r\leq n}\frac{\binom{2k_1}{k_1}k_1}{4^{k_1}} 
\cdot\frac{1}{(2k_1-1)^2\cdots(2k_r-1)^2} = \frac{\binom{2n}{n}n}{4^{2n-1}}
\sum_{k=1}^n\frac{\binom{2n-1}{n-k}}{(2k-1)^{2r}}.
\end{equation}
\end{theorem}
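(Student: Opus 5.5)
Our plan is induction on $r$, using the recursive structure of the star sum. Write $L_r(n)$ for the left-hand side of \eqref{1.8} and $R_r(n)$ for the right-hand side. The largest index $k_r$ appears only in the last factor, so the defining sum gives
\[
L_r(n)=\sum_{m=1}^n \frac{L_{r-1}(m)}{(2m-1)^2}\qquad (r\ge 2),
\]
together with the natural base $L_0(m)=\binom{2m}{m}m/4^{m}$. We also have $L_1(n)=\sum_{m=1}^n \binom{2m}{m}m/(4^m(2m-1)^2)$. It therefore suffices to prove two things: the identity for $r=1$, or equivalently a suitable ``$r=0$'' version, and the recursion $R_r(n)=\sum_{m=1}^n R_{r-1}(m)/(2m-1)^2$. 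The recursion is equivalent to
\[
R_r(n)-R_r(n-1)=\frac{R_{r-1}(n)}{(2n-1)^2}\qquad\text{with } R_r(0)=0.
\]

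To verify the recursion, set $c_n:=\binom{2n}{n}n/4^{2n-1}$ and compare coefficients of $(2k-1)^{-2r}$ on both sides. The left side contributes $c_n\binom{2n-1}{n-k}-c_{n-1}\binom{2n-3}{n-1-k}$. The right side contributes $c_n\binom{2n-1}{n-k}(2k-1)^2/(2n-1)^2$, but only after reindexing, since it involves $(2k-1)^{-2r+2}$. Because the exponents differ, matching coefficients directly is not legitimate. Instead we prove the recursion as an identity holding for every $r$, via the single-variable identity
\[
c_n\binom{2n-1}{n-k}\Bigl(1-\frac{(2k-1)^2}{(2n-1)^2}\Bigr)=c_{n-1}\binom{2n-3}{n-1-k}
\]
for all $1\le k\le n$. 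This is a purely hypergeometric check: $(1-\frac{(2k-1)^2}{(2n-1)^2})=\frac{4(n-k)(n+k-1)}{(2n-1)^2}$, and the ratio $\binom{2n-1}{n-k}/\binom{2n-3}{n-1-k}=\frac{(2n-1)(2n-2)}{(n-k)(n+k-1)}$. With $c_n/c_{n-1}=\frac{(2n-1)}{8(n-1)}$, the product becomes $\frac{2n-1}{8(n-1)}\cdot\frac{4\cdot(2n-1)(2n-2)}{(2n-1)^2}=1$. So the coefficients of each $(2k-1)^{-2r}$ match after multiplying the recursion through appropriately. The $k=n$ term on the left vanishes consistently with $\binom{2n-3}{-1}=0$.

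With the recursion in hand, it remains to handle the base. We take $r=0$ formally, which requires $\frac{\binom{2n}{n}n}{4^{2n-1}}\sum_{k=1}^n\binom{2n-1}{n-k}=\binom{2n}{n}n/4^n$. This holds because $\sum_{k=1}^n\binom{2n-1}{n-k}=2^{2n-2}$, half the row sum. Running the recursion once from $r=0$ then gives $r=1$, and induction finishes the proof.

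The main obstacle is being careful that the recursion argument really works for all $r$ simultaneously. This means organizing it as: define $R_r$ for $r\ge 0$, then show $\sum_{m\le n}R_{r-1}(m)/(2m-1)^2=R_r(n)$ by telescoping in $n$ with the termwise identity above. Theorem~\ref{thm:1.1} is handled the same way, with base $r=0$ requiring $\frac{4}{\binom{2n}{n}}\sum_k\binom{2n}{n-2k-1}=4^n/\binom{2n}{n}$. This holds because the sum of $\binom{2n}{j}$ over $j<n$ with $n-j$ odd equals $4^{n-1}$. The telescoping identity there is analogous, but it passes through the factor $(2k+1)^2/n^2$; this is where a parity or indexing issue is most likely to need care.
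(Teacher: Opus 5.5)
Your proof is correct. Its inductive step is the same computation as the paper's, but your base case is genuinely different and more elementary.

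\textbf{The step.} Your termwise identity $c_n\binom{2n-1}{n-k}\bigl(1-\frac{(2k-1)^2}{(2n-1)^2}\bigr)=c_{n-1}\binom{2n-3}{n-1-k}$ is exactly the induction step in the paper's proof of Lemma~\ref{lem:2.4}, shifted by one in $n$. It rests on the same factorization $(2n-1)^2-(2k-1)^2=4(n-k)(n+k-1)$. Telescoping $R_r$ in $n$ is equivalent to the paper's interchange of summation followed by Lemma~\ref{lem:2.4}.

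\textbf{The base case.} The paper starts at $r=1$. It proves that case, Lemma~\ref{lem:2.3}, by equating two evaluations of $\sum_{k\ge1}4^k/\bigl(\binom{2k}{k}k(2k+2n+1)\bigr)$ imported from the authors' earlier papers. You start at $r=0$, where the identity reduces to $\sum_{k=1}^n\binom{2n-1}{n-k}=4^{n-1}$. The recursion, already valid from $r=0$ to $r=1$ since $L_1(n)=\sum_{m\le n}L_0(m)/(2m-1)^2$, then yields Lemma~\ref{lem:2.3} for free.

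So your route shows Lemma~\ref{lem:2.3} is redundant given Lemma~\ref{lem:2.4}, and the whole argument becomes elementary and self-contained. The paper's route adds nothing to the theorem itself beyond showing the link with those series evaluations.

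\textbf{Small remarks.} Your worry that matching coefficients ``is not legitimate'' is unfounded. After writing $(2k-1)^{-2r+2}=(2k-1)^2(2k-1)^{-2r}$, termwise equality is a sufficient condition, uniform in $r$, and that is what you verify. Your use of $c_n/c_{n-1}$ needs $n\ge 2$; the case $n=1$ is trivial, since $c_0=0$ and both sides equal $c_1$.

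The same shortcut works for Theorem~\ref{thm:1.1}: an $r=0$ base replaces Lemma~\ref{lem:2.1}, with Lemma~\ref{lem:2.2} as the step. There is no parity problem there, because the term with $2k+1=n$ vanishes on both sides of the termwise identity.
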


To the best of our knowledge, weighted analogues of multiple harmonic or
$t$-harmonic star sums have rarely been considered before. We mention two such
instances in Section~\ref{sec:4} below. Somewhat related is the
``mollified companion" of $H_n(s_1,\ldots,s_r)$, defined by identity~(6) in
\cite[p.~1671]{Zh}. However, the weight $\binom{n}{k_r}/\binom{n+k_r}{k_r}$
(here in our notation) occurs in the largest summation index, as opposed to 
the smallest one in \eqref{1.7} and \eqref{1.8}, and no evaluations are 
involved.

For the proofs of Theorems~\ref{thm:1.1} and~\ref{thm:1.2} we require a few
combinatorial identities, which will be stated and proved in 
Section~\ref{sec:2}. We will then prove the theorems in Section~\ref{sec:3} and
conclude the paper with some further remarks in Section~\ref{sec:4}.

\section{Some combinatorial identities}\label{sec:2}

The proofs of the theorems will be done by induction on $n$, and for both the
induction beginnings and the induction steps we require the following
combinatorial identities.

\begin{lemma}\label{lem:2.1}
For any integer $n\geq 1$ we have
\begin{equation}\label{2.1}
\sum_{k=1}^n\frac{4^k}{\binom{2k}{k}k^2} = \frac{4}{\binom{2n}{n}}
\sum_{k=0}^{\lfloor\frac{n-1}{2}\rfloor}\frac{\binom{2n}{n-2k-1}}{(2k+1)^2}.
\end{equation}
\end{lemma}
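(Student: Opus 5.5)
We prove \eqref{2.1} by induction on $n$. Write
\[
A_n:=\sum_{k=0}^{\lfloor\frac{n-1}{2}\rfloor}\frac{\binom{2n}{n-2k-1}}{(2k+1)^2},
\]
so the claim reads $\sum_{k=1}^n 4^k/(\binom{2k}{k}k^2)=4A_n/\binom{2n}{n}$. For $n=1$ both sides equal $2$: the left side is $4/(2\cdot 1)=2$, and the right side is $4\binom{2}{0}/\binom{2}{1}=2$. The induction step amounts to the difference identity
\begin{equation}\label{plan:diff}
\frac{4A_{n+1}}{\binom{2n+2}{n+1}}-\frac{4A_n}{\binom{2n}{n}}=\frac{4^{n+1}}{\binom{2n+2}{n+1}(n+1)^2}.
\end{equation}
Since $\binom{2n+2}{n+1}=\frac{2(2n+1)}{n+1}\binom{2n}{n}$, multiplying \eqref{plan:diff} by $\binom{2n+2}{n+1}/4$ shows it is equivalent to
\[
A_{n+1}-\frac{2(2n+1)}{n+1}A_n=\frac{4^n}{(n+1)^2}.
\]

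To verify this, we reindex by $j=2k+1$ odd, so that $A_n=\sum_{j\text{ odd}}\binom{2n}{n-j}/j^2$. We then compare term by term using
\[
\binom{2n+2}{n+1-j}=\binom{2n}{n-j}\frac{(2n+2)(2n+1)}{(n+1-j)(n+1+j)}.
\]
This gives
\[
\binom{2n+2}{n+1-j}-\frac{2(2n+1)}{n+1}\binom{2n}{n-j}=\frac{2(2n+1)}{n+1}\binom{2n}{n-j}\,\frac{j^2}{(n+1)^2-j^2}.
\]
The factor $j^2$ cancels the denominator $j^2$ in $A_n$. This cancellation is the crux, and it is the reason the weight $1/j^2$ works. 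The left side of the reduced identity therefore becomes
\[
\frac{2(2n+1)}{(n+1)}\sum_{j\text{ odd}}\frac{\binom{2n}{n-j}}{(n+1-j)(n+1+j)}
=\frac{1}{(n+1)^2}\sum_{j\text{ odd}}\binom{2n+2}{n+1-j}\cdot\frac{(n+1)^2}{(2n+2)(2n+1)}\cdot\frac{2(2n+1)}{n+1}.
\]
After simplification this is
\[
\frac{1}{(n+1)^2}\sum_{j\ \mathrm{odd},\,1\le j\le n+1}\binom{2n+2}{n+1-j}.
\]
One caveat concerns the range of summation. The upper range of $j$ differs between $A_n$ and $A_{n+1}$: a possible top term with $j=n+1$ appears in $A_{n+1}$ and not in $A_n$. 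For that $j$ the formula above still holds, with the convention $\binom{2n}{-1}=0$ and the limiting value, but it is cleaner to check this boundary term separately: it equals $\binom{2n+2}{0}/(n+1)^2$, which matches.

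The remaining step is a classical evaluation. The sum $\sum_{j\text{ odd}}\binom{2n+2}{n+1-j}$ is the sum of the binomial coefficients $\binom{2n+2}{i}$ with $i\equiv n \pmod 2$ and $i<n+1$. This is half of all coefficients with $i\equiv n\pmod 2$. Those number $2^{2n+1}$ in total (half of $2^{2n+2}$), and they are symmetric about $n+1$, which is itself excluded because it has the other parity. Hence the sum equals $2^{2n}=4^n$, giving $4^n/(n+1)^2$ as required. I expect the only real obstacle to be bookkeeping: keeping the boundary terms in the range of $j$ and the floor limits consistent, and getting the parity-half-sum evaluation right.
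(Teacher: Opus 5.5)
Your proof is correct, and it takes a genuinely different route from the paper.

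\textbf{The paper's route.} The paper does not use induction for this lemma. It takes the infinite series $\sum_{k\ge1}4^k/\bigl(\binom{2k}{k}k(2k+2n)\bigr)$ and evaluates it in two ways, using Corollary~5 of \cite{DV1} and identity (3.6) of \cite{DV2}. It observes that the $\pi^2$-terms of the two evaluations coincide, and equates the rational parts. In that step the sums $\sum_k(\pm1)^k\binom{2n}{n-k}/k^2$ combine so that only odd $k$ survive.

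\textbf{Your route.} You reduce the lemma to the first-order recurrence
\[
A_{n+1}-\frac{2(2n+1)}{n+1}A_n=\frac{4^n}{(n+1)^2}
\]
and verify it term by term. I checked each step and all are right:
\begin{itemize}
\item the factorization producing $j^2/((n+1)^2-j^2)$;
\item the simplification of each contribution to $\binom{2n+2}{n+1-j}/(n+1)^2$;
\item the separate boundary term $j=n+1$ when $n$ is even;
\item the parity half-sum $\sum_{0\le i\le n,\ i\equiv n \pmod 2}\binom{2n+2}{i}=4^n$, where the middle index $n+1$ is excluded because it has the wrong parity.
\end{itemize}
One small wording fix: the coefficients with $i\equiv n \pmod 2$ \emph{sum} to $2^{2n+1}$; they do not ``number'' $2^{2n+1}$.

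\textbf{Comparison.} Your argument is self-contained and elementary, needing no external series evaluations. It also makes transparent why the weight $1/(2k+1)^2$ is the right one. In spirit it matches the inductive proofs the paper gives for Lemmas~\ref{lem:2.2} and~\ref{lem:2.4}. The paper's proof is shorter on the page and shows where the identity comes from, namely as the rational part of a series whose value also involves $\pi^2$. However, it rests entirely on the cited results.
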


\begin{proof}
We equate two different evaluations of the same class of infinite series.
From Corollary~5 in \cite{DV1}, we have
\begin{equation}\label{2.2}
\sum_{k=1}^\infty\frac{4^k}{\binom{2k}{k}k(2k+2n)}=\frac{\binom{2n}{n}}{4^{n+1}}
\left(\pi^2+2\sum_{k=1}^n\frac{4^k}{\binom{2k}{k}k^2}\right).
\end{equation}
On the other hand, the identity (3.6) in \cite{DV2} with $p=1$ evaluates
the left-hand side of \eqref{2.2} as
\begin{equation}\label{2.3}
\frac{\binom{2n}{n}}{2^{2n+1}}\cdot\frac{\pi^2}{2}
+\frac{1}{4^n}\sum_{k=1}^n\frac{\binom{2n}{n-k}}{k^2}
-\frac{1}{4^n}\sum_{k=1}^n(-1)^k\frac{\binom{2n}{n-k}}{k^2}.
\end{equation}
The last two terms in \eqref{2.3} combine to give
\[
\frac{2}{4^n}\sum_{k=0}^{\lfloor\frac{n-1}{2}\rfloor}
\binom{2n}{n-2k-1}\frac{1}{(2k+1)^2}.
\]
Finally, by equating the rational parts in \eqref{2.2} and \eqref{2.3} we get
\eqref{2.1}, as desired.
\end{proof}

\begin{lemma}\label{lem:2.2}
For any integers $1\leq \ell\leq n$ we have
\begin{equation}\label{2.4}
\sum_{k=\ell}^n\frac{\binom{2k}{k-\ell}}{\binom{2k}{k}k^2} 
= \frac{\binom{2n}{n-\ell}}{\binom{2n}{n}\ell^2}.
\end{equation}
\end{lemma}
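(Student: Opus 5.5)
We prove \eqref{2.4} by induction on $n$ for fixed $\ell$, i.e.\ we show that the right-hand side telescopes. Write
\[
r_n:=\frac{\binom{2n}{n-\ell}}{\binom{2n}{n}}=\frac{n!\,n!}{(n-\ell)!\,(n+\ell)!}\qquad(n\geq\ell).
\]
Then \eqref{2.4} reads $\sum_{k=\ell}^n r_k/k^2=r_n/\ell^2$. It therefore suffices to check two things:
\begin{itemize}
\item[(a)] the base case $n=\ell$, where both sides equal $r_\ell/\ell^2$;
\item[(b)] the increment identity $r_n/\ell^2-r_{n-1}/\ell^2=r_n/n^2$ for $n>\ell$.
\end{itemize}

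For (b), the factorial form of $r_n$ gives the ratio directly:
\[
\frac{r_{n-1}}{r_n}=\frac{(n-1)!^2\,(n-\ell)!\,(n+\ell)!}{n!^2\,(n-1-\ell)!\,(n-1+\ell)!}
=\frac{(n-\ell)(n+\ell)}{n^2}=1-\frac{\ell^2}{n^2}.
\]
Hence $r_n-r_{n-1}=r_n\,\ell^2/n^2$. Dividing by $\ell^2$ gives exactly the required increment. Summing from $k=\ell+1$ to $n$ and adding the base term then yields \eqref{2.4}.

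There is no real obstacle here. The only point requiring care is the factorial bookkeeping in the ratio $r_{n-1}/r_n$. As a sanity check, the recursion also holds formally at $n=\ell$ if one sets $r_{\ell-1}=0$, since $\binom{2\ell-2}{-1}=0$. Consequently one could equally start the telescoping sum at $k=\ell$ with an empty previous term.
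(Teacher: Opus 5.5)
Your proof is correct. It is exactly the induction on $n$ with base case $n=\ell$ that the paper mentions, and the ratio $r_{n-1}/r_n=1-\ell^2/n^2$ is the whole induction step; the paper itself only alludes to this induction and instead points to the equivalent identity (2.4) of \cite{HHT}, and your telescoping is the $a_{j,\ell}=j^2/\ell^2$ case of Theorem~\ref{thm:4.1}.
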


\begin{proof}
This can be proven by induction on $n$ with base case $n=\ell$. However, an easy
manipulation of binomial coefficients shows that \eqref{2.4} is equivalent to
the identity (2.4) in \cite{HHT}.
\end{proof}

\begin{lemma}\label{lem:2.3}
For any integer $n\geq 1$ we have
\begin{equation}\label{2.5}
\sum_{k=1}^n\frac{\binom{2k}{k}k}{4^k(2k-1)^2}=\frac{\binom{2n}{n}n}{4^{2n-1}}
\sum_{k=1}^n\frac{\binom{2n-1}{n-k}}{(2k-1)^2}.
\end{equation}
\end{lemma}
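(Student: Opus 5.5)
Lemma~\ref{lem:2.3} can be proved by induction on $n$, checking that both sides of \eqref{2.5} have the same increment as $n$ grows. Write $a_n=\binom{2n}{n}n/4^{2n-1}$ and
\[
S_n=\sum_{k=1}^n\frac{\binom{2n-1}{n-k}}{(2k-1)^2}.
\]
For $n=1$ the left side is $\frac{2}{4}=\frac12$, and the right side is $\frac{2}{4}\cdot 1=\frac12$, so the base case holds. For the induction step we must show
\begin{equation*}
a_{n+1}S_{n+1}-a_nS_n=\frac{\binom{2n+2}{n+1}(n+1)}{4^{n+1}(2n+1)^2}.
\end{equation*}
The ratio of prefactors is easy to compute: $a_{n+1}/a_n=\frac{2(2n+1)}{n}\cdot\frac{n+1}{16}\cdot\frac{1}{1}$ up to simplification, using $\binom{2n+2}{n+1}=\frac{2(2n+1)}{n+1}\binom{2n}{n}$. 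After dividing by $a_{n+1}$, the claim reduces to an identity of the form
\[
S_{n+1}-c_nS_n=\frac{d_n}{(2n+1)^2},
\]
with explicit rational $c_n,d_n$.

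To prove this reduced identity, compare coefficients termwise in $k$. One has
\[
\binom{2n+1}{n+1-k}=\binom{2n-1}{n+1-k}+2\binom{2n-1}{n-k}+\binom{2n-1}{n-k-1},
\]
so $S_{n+1}$ can be expressed through shifted binomials $\binom{2n-1}{n-j}$. A cleaner route is to use the ratio
\[
\binom{2n+1}{n+1-k}\Big/\binom{2n-1}{n-k}=\frac{(2n+1)(2n)}{(n+1-k)(n+k)}.
\]
The key observation is $(n+1-k)(n+k)=n(n+1)-k(k-1)=\frac{(2n+1)^2-(2k-1)^2}{4}$. So each term of $S_{n+1}$ equals a term of $S_n$ multiplied by
\[
\frac{4(2n+1)(2n)}{(2n+1)^2-(2k-1)^2}.
\]
Partial fractions in the variable $(2k-1)^2$ then give
\[
\frac{1}{(2k-1)^2\bigl((2n+1)^2-(2k-1)^2\bigr)}=\frac{1}{(2n+1)^2}\left(\frac{1}{(2k-1)^2}+\frac{1}{(2n+1)^2-(2k-1)^2}\right).
\]
Hence $S_{n+1}$ equals a multiple of $S_n$ plus $(2n+1)^{-2}$ times a sum with no $(2k-1)^{-2}$ factor. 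That remaining sum is $\sum_k\binom{2n+1}{n+1-k}$ up to constants, which equals $2^{2n}$ (half of the full row sum). The term $k=n+1$ of $S_{n+1}$, equal to $1/(2n+1)^2$, must be handled separately.

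The main obstacle is bookkeeping: checking that the multiple of $S_n$ produced this way matches exactly $a_n/a_{n+1}$, so that the $S_n$ terms cancel. Then one checks that the leftover constant, namely the half row sum $4^n$ together with the $k=n+1$ term, gives precisely the required increment $\binom{2n+2}{n+1}(n+1)/(4^{n+1}(2n+1)^2)$ after multiplying back by $a_{n+1}$. If the constants fail to align, the fallback is to prove the reduced identity $S_{n+1}-c_nS_n=d_n/(2n+1)^2$ directly by a second induction, or to treat it as a Gosper/Zeilberger-type telescoping in $k$.
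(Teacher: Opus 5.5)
Your induction is correct, and the bookkeeping you flagged does close exactly, with two small fixes.

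First, the prefactor ratio is $a_{n+1}/a_n=\frac{2(2n+1)}{n+1}\cdot\frac{n+1}{n}\cdot\frac{1}{16}=\frac{2n+1}{8n}$. The expression you wrote carries a spurious factor $n+1$, and with it the $S_n$ terms would not cancel.

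Second, the relation
\[
\binom{2n+1}{n+1-k}\bigl((2n+1)^2-(2k-1)^2\bigr)=8n(2n+1)\binom{2n-1}{n-k}
\]
holds at $k=n+1$ as well, since both sides vanish. So the split
\[
\frac{\binom{2n+1}{n+1-k}}{(2k-1)^2}=\frac{8n}{2n+1}\cdot\frac{\binom{2n-1}{n-k}}{(2k-1)^2}+\frac{\binom{2n+1}{n+1-k}}{(2n+1)^2}
\]
is valid for all $1\le k\le n+1$. Summing it gives $S_{n+1}=\frac{8n}{2n+1}S_n+\frac{4^n}{(2n+1)^2}$, via $\sum_{m=0}^{n}\binom{2n+1}{m}=4^n$. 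If you instead split off $k=n+1$, the sum over $1\le k\le n$ is $4^n-1$, and the $k=n+1$ term supplies the missing $1$; it must not be added on top of $4^n$. Since $a_n/a_{n+1}=\frac{8n}{2n+1}$ and $a_{n+1}4^n=\binom{2n+2}{n+1}(n+1)/4^{n+1}$, this is exactly the increment of the left side, so no fallback is needed.

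The paper argues quite differently. It equates two closed-form evaluations of the infinite series $\sum_{k\ge1}4^k/\bigl(\binom{2k}{k}k(2k+2n+1)\bigr)$ taken from the authors' earlier papers. It then reindexes and uses $\binom{2k-2}{k-1}=\binom{2k}{k}\frac{k}{2(2k-1)}$. That route explains where the identity comes from, but it rests on external results; yours is self-contained and elementary.

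Your key step, the binomial ratio together with $4(n+1-k)(n+k)=(2n+1)^2-(2k-1)^2$, is the same mechanism the paper uses to prove Lemma~\ref{lem:2.4}. In fact Lemma~\ref{lem:2.3} follows directly from Lemma~\ref{lem:2.4}: sum over $1\le\ell\le n$ and use $\sum_{\ell=1}^{k}\binom{2k-1}{k-\ell}=4^{k-1}$. This is another way to see why your recurrence closes.
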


\begin{proof}
Using Corollary~5 in \cite{DV1} with $n$ replaced by $2n+1$, we get
\begin{equation}\label{2.6}
\sum_{k=1}^\infty\frac{4^k}{\binom{2k}{k}k(2k+2n+1)}
=\frac{2^{2n+1}}{\binom{2n}{n}(2n+1)}
\sum_{k=0}^n\frac{\binom{2k}{k}}{4^k(2k+1)}.
\end{equation}
On the other hand, using (3.5) in \cite{DV2} with $p=1$ and $\ell$ replaced by
$n$, we have
\begin{equation}\label{2.7}
\sum_{k=1}^\infty\frac{4^k}{\binom{2k}{k}k(2k+2n+1)}
=\frac{2}{4^n}\sum_{k=0}^n\frac{\binom{2n+1}{n-k}}{(2k+1)^2}.
\end{equation}
We now equate \eqref{2.6} and \eqref{2.7} and shift the summations by 1,
obtaining
\begin{equation}\label{2.8}
\sum_{k=1}^{n+1}\frac{\binom{2k-2}{k-1}}{4^{k-1}(2k-1)}
=\frac{\binom{2n}{n}(2n+1)}{4^{2n}}
\sum_{k=1}^{n+1}\frac{\binom{2n+1}{n+1-k}}{(2k-1)^2}.
\end{equation}
Finally, we replace $n$ by $n-1$ in \eqref{2.8} and use the identity
\begin{equation}\label{2.9}
\binom{2k-2}{k-1} = \binom{2k}{k}\frac{k}{2(2k-1)}
\end{equation}
both in this form and with $n$ in place of $k$. Then \eqref{2.8} leads to 
\eqref{2.5} after some easy manipulations.
\end{proof}

\begin{lemma}\label{lem:2.4}
For any integers $1\leq \ell\leq n$ we have
\begin{equation}\label{2.10}
\sum_{k=\ell}^n\frac{\binom{2k}{k}\binom{2k-1}{k-\ell}k}{4^{2k}(2k-1)^2}
= \frac{\binom{2n}{n}\binom{2n-1}{n-\ell}n}{4^{2n}(2\ell-1)^2}.
\end{equation}
\end{lemma}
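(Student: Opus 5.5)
The plan is to prove \eqref{2.10} by induction on $n$ for fixed $\ell$, with base case $n=\ell$, in the same way as suggested for Lemma~\ref{lem:2.2}: the right-hand side should telescope against the summand. For $n=\ell$ the left-hand side is the single term $\binom{2\ell}{\ell}\ell/(4^{2\ell}(2\ell-1)^2)$. On the right we have $\binom{2\ell-1}{0}=1$, so the two sides agree.

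For the induction step, write $R_n$ for the right-hand side of \eqref{2.10}. It suffices to show that, for $n>\ell$, the difference $R_n-R_{n-1}$ equals the $k=n$ summand
\[
\frac{\binom{2n}{n}\binom{2n-1}{n-\ell}n}{4^{2n}(2n-1)^2}.
\]
I will factor $\binom{2n}{n}\binom{2n-1}{n-\ell}n/4^{2n}$ out of both $R_n$ and $R_{n-1}$. Then the claim reduces to
\[
\frac{1-Q}{(2\ell-1)^2}=\frac{1}{(2n-1)^2},
\qquad
Q:=\frac{16\binom{2n-2}{n-1}\binom{2n-3}{n-1-\ell}(n-1)}{\binom{2n}{n}\binom{2n-1}{n-\ell}n}.
\]

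The only real work is simplifying $Q$, and I will do it with two elementary ratios. The first follows from \eqref{2.9}:
\[
\binom{2n-2}{n-1}\Big/\binom{2n}{n}=\frac{n}{2(2n-1)}.
\]
The second comes from expanding the factorials:
\[
\binom{2n-3}{n-\ell-1}\Big/\binom{2n-1}{n-\ell}=\frac{(n-\ell)(n+\ell-1)}{(2n-1)(2n-2)}.
\]
Multiplying these out, the factors $n$ and $n-1$ cancel and
\[
Q=\frac{4(n-\ell)(n+\ell-1)}{(2n-1)^2}.
\]
The key observation is the factorization
\[
4(n-\ell)(n+\ell-1)=(2n-1)^2-(2\ell-1)^2,
\]
which is a difference of squares, since $(2n-1)-(2\ell-1)=2(n-\ell)$ and $(2n-1)+(2\ell-1)=2(n+\ell-1)$. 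It gives $1-Q=(2\ell-1)^2/(2n-1)^2$, which is exactly what is needed, and the induction closes.

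I expect no serious obstacle. The only delicate point is getting the binomial ratio with the shifted lower index $n-\ell-1$ and the power $4^{2n-2}$ exactly right. If that simplification does not come out as the difference of squares above, I would recheck those two ratios first. Alternatively, as in Lemma~\ref{lem:2.2}, one could try to match \eqref{2.10} to a known telescoping identity via \eqref{2.9}, but the direct induction seems cleanest.
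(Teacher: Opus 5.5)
Your proof is correct and follows the paper's argument: induction on $n$ from the base case $n=\ell$, with the step reduced via \eqref{2.9} and the ratio $\binom{2n-3}{n-\ell-1}/\binom{2n-1}{n-\ell}$ to the difference-of-squares identity $4(n-\ell)(n+\ell-1)+(2\ell-1)^2=(2n-1)^2$. The paper writes the step as $n\to n+1$, which is the same identity with $n$ shifted, and your binomial ratio is the correct form of the paper's \eqref{2.11}: its printed factor $(n-\ell)$ should read $(n+\ell)$, as the paper's own subsequent use of $4(n-\ell+1)(n+\ell)$ confirms.
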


\begin{proof}
We proceed by induction on $n$, with $\ell\geq 1$ fixed. The base case $\ell=1$
is clear since both side are then the same. For what follows, we require again
the binomial identity \eqref{2.9}, as well as
\begin{equation}\label{2.11}
\binom{2n-1}{n-\ell} 
= \binom{2n+1}{n+1-\ell}\frac{(n-\ell+1)(n-\ell)}{2n(2n+1)},
\end{equation}
which is easy to verify with some basic properties of the binomial 
coefficients.

We now assume that \eqref{2.10} holds for some $n\geq\ell$. For the induction
step we add to both side of \eqref{2.10} the term in the sum on the left that
corresponds to $k=n+1$, obtaining
\begin{equation}\label{2.12}
\sum_{k=\ell}^{n+1}\frac{\binom{2k}{k}\binom{2k-1}{k-\ell}k}{4^{2k}(2k-1)^2}
= \frac{\binom{2n}{n}\binom{2n-1}{n-\ell}n}{4^{2n}(2\ell-1)^2}
+\frac{\binom{2n+2}{n+1}\binom{2n+1}{n+1-\ell}(n+1)}{4^{2n+2}(2n+1)^2}.
\end{equation}
We now use \eqref{2.9} with $k=n+1$ and \eqref{2.11} to rewrite the right-hand
side of \eqref{2.12} as
\begin{align*}
\frac{\binom{2n+2}{n+1}\binom{2n+1}{n+1-\ell}(n+1)}{4^{2n+2}(2\ell-1)^2(2n+1)^2}
&\left(4(n-\ell+1)(n+\ell)+(2\ell-1)^2\right) \\
&\quad=\frac{\binom{2n+2}{n+1}\binom{2n+1}{n+1-\ell}(n+1)}{4^{2n+2}(2\ell-1)^2},
\end{align*}
which is the right-hand side of \eqref{2.10} with $n$ replaced by $n+1$. This
completes the induction.
\end{proof}

\section{Proofs of the theorems}\label{sec:3}

\begin{proof}[Proof of Theorem~\ref{thm:1.1}]
We proceed by induction on $r$. The base case $r=1$ is simply 
Lemma~\ref{lem:2.1}. Let now $r\geq 2$ and assume that Theorem~\ref{thm:1.1}
holds for $r-1$. We denote the left-hand side of \eqref{1.7} by $L_1$ and for
greater ease of notation we set $j=k_r$. By induction hypothesis we then have
\begin{align}
L_1&=\sum_{j=1}^n\frac{1}{j^2}\sum_{1\leq k_1\leq\cdots\leq k_{r-1}\leq j}
\frac{4^{k_1}}{\binom{2k_1}{k_1}}\cdot\frac{1}{k_1^2\cdots k_{r-1}^2}\label{3.1}\\
&=\sum_{j=1}^n\frac{1}{j^2}\cdot\frac{4}{\binom{2j}{j}}
\sum_{k=0}^{\lfloor\frac{j-1}{2}\rfloor}\frac{\binom{2j}{j-2k-1}}{(2k+1)^{2r-2}}\nonumber\\
&=\sum_{k=0}^{\lfloor\frac{n-1}{2}\rfloor}\frac{4}{(2k+1)^{2r-2}}
\sum_{j=2k+1}^n\frac{\binom{2j}{j-2k-1}}{\binom{2j}{j}j^2}.\nonumber
\end{align}
We now apply Lemma~\ref{lem:2.2} to the right-most sum, with $\ell=2k+1$ and
$k=j$. Then with \eqref{3.1} we get
\[
L_1=\sum_{k=0}^{\lfloor\frac{n-1}{2}\rfloor}\frac{4}{(2k+1)^{2r-2}}
=\frac{\binom{2n}{n-2k-1}}{\binom{2n}{n}(2k+1)^2}.
\]
This is the same as the right-hand side of \eqref{1.7}, which concludes the 
induction step and thus the proof of Theorem~\ref{thm:1.1}.
\end{proof}

\begin{proof}[Proof of Theorem~\ref{thm:1.2}]
We proceed again by induction on $r$. Here the base case $r=1$ is clearly
Lemma~\ref{lem:2.3}. As before, we let $r\geq 2$ and assume that 
Theorem~\ref{thm:1.2} holds for $r-1$. Denoting the left-hand side of 
\eqref{1.8} by $L_2$ and setting $j=k_r$, we get by induction hypothesis 
\begin{align}
L_2&=\sum_{j=1}^n\frac{1}{(2j-1)^2}\sum_{1\leq k_1\leq\cdots\leq k_{r-1}\leq j}
\frac{\binom{2k_1}{k_1}k_1}{4^{k_1}}
\cdot\frac{1}{(2k_1-1)^2\cdots(2k_{r-1}-1)^2}\label{3.2}\\
&=\sum_{j=1}^n\frac{1}{(2j-1)^2}\cdot\frac{\binom{2j}{j}j}{4^{2j-1}}
\sum_{k=1}^j\frac{\binom{2j-1}{j-k}}{(2k-1)^{2r-2}}\nonumber\\
&=\sum_{k=1}^n\frac{1}{(2k-1)^{2r-2}}
\sum_{j=k}^n\frac{\binom{2j}{j}\binom{2j-1}{j-k}j}{4^{2j-1}(2j-1)^2}.\nonumber
\end{align}
To deal with the right-most sum, we use Lemma~2.4, getting with \eqref{3.2},
\[
L_2=\sum_{k=1}^n\frac{1}{(2k-1)^{2r-2}}
\cdot\frac{\binom{2n}{n}\binom{2n-1}{n-k}n}{4^{2n-1}(2k-1)^2},
\]
which is the right-hand side of \eqref{1.8}. This concludes the induction step
and the proof of Theorem~\ref{thm:1.2}.
\end{proof}

\section{Further remarks}\label{sec:4}

{\bf 1.} The following class of weighted multiple harmonic star sums was 
obtained in Lemma~5.5 of \cite{TZ}, which we quote with a notation consistent
with \eqref{1.7} above: For all positive integers $n$ and $r$ and variable $x$,
we have
\begin{equation}\label{5.1}
\sum_{1\leq k_1\leq\cdots\leq k_r\leq n}\frac{(1+x)^{k_1}-1}{k_1\cdots k_r}
= \sum_{k=1}^n\binom{n}{k}\frac{x^k}{k^r}.
\end{equation}
The case $x=-1$ was previously known and can be stated in analogy to 
\eqref{1.5} as
\begin{equation}\label{5.2}
H_n^*(\{1\}^r)=\sum_{k=1}^n\binom{n}{k}\frac{(-1)^k}{k^r}.
\end{equation}
As was the case with \eqref{1.5}, the identity \eqref{5.2} was also generalized
in \cite{HHT}. A second identity, related to \eqref{5.1}, can be found as
Lemma~5.6 in \cite{TZ}:
\begin{equation}\label{5.3}
\sum_{1\leq k_1\leq\cdots\leq k_r\leq n}
\frac{(1+x)^{k_1}}{k_1\cdots k_r}(-1)^{k_r}\binom{n}{k_r}
=\sum_{k=1}^n\frac{x^k-1}{k^r}.
\end{equation}

{\bf 2.} As mentioned in the Introduction, the sums $H_n(\{2\}^r)$ and 
$t_n(\{2\}^r)$ occur in the Maclaurin series expansions of positive integer 
powers of inverse trigonometric functions;
see the classical book by Schwatt \cite[p.~123ff]{Sch},
which was first published in 1924. For instance (see also \cite{BC}), with the
notations in \eqref{1.2} and \eqref{1.4}, we have for all integers $r\geq 0$,
\begin{align*}
(\arcsin{x})^{2r+1} &= (2r+1)!\sum_{k=0}^\infty
\frac{\binom{2k}{k}t_k(\{2\}^r)}{4^k}\cdot\frac{x^{2k+1}}{2k+1},\\
(\arcsin{x})^{2r+2} &= \frac{(2r+2)!}{4^r}\sum_{k=1}^\infty
\frac{4^k H_{k-1}(\{2\}^r)}{\binom{2k}{k}2k}\cdot\frac{x^{2k}}{2k},
\end{align*}
where $t_k(\{2\}^0)=H_k(\{2\}^0)=1$ for all $k\geq 0$.

\medskip
{\bf 3.} The identities \eqref{2.4} and \eqref{2.10} are quite special in that
the summands on the left are very similar to the sums on the right. This
indicates that there may be a general principle behind this phenomenon. And
indeed, an analysis of the situation leads to the following result.

\begin{theorem}\label{thm:4.1}
For each integer $\ell\geq 1$, let $(a_{j,\ell})_{j\geq\ell}$ be a sequence
of real or complex numbers such that $a_{j,\ell}\neq 1$ for $j\geq\ell+1$. Then
for all integers $1\leq\ell\leq n$, we have the identity
\begin{equation}\label{4.4}
\sum_{k=\ell}^n\prod_{j=\ell}^{k-1}\frac{a_{j,\ell}}{a_{j+1,\ell}-1}
=a_{n,\ell}\prod_{j=\ell}^{n-1}\frac{a_{j,\ell}}{a_{j+1,\ell}-1},
\end{equation}
where $a_{\ell,\ell} = 1$ for all $\ell\geq 1$.
\end{theorem}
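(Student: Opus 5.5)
We fix $\ell\geq 1$ and prove \eqref{4.4} by induction on $n\geq\ell$, in the same way as in the proof of Lemma~\ref{lem:2.4}. For brevity, write $a_j$ for $a_{j,\ell}$ and set
\[
P_k:=\prod_{j=\ell}^{k-1}\frac{a_j}{a_{j+1}-1},
\]
so that $P_\ell=1$ (empty product). These products are well defined because $a_{j+1}\neq 1$ for $j+1\geq\ell+1$. In this notation \eqref{4.4} reads
\[
\sum_{k=\ell}^n P_k = a_n P_n .
\]

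For the base case $n=\ell$, the left-hand side is $P_\ell=1$ and the right-hand side is $a_\ell P_\ell=a_{\ell,\ell}=1$, so the two sides agree.

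For the induction step, assume $\sum_{k=\ell}^n P_k=a_nP_n$ for some $n\geq\ell$. Adding the term $P_{n+1}$ gives
\[
\sum_{k=\ell}^{n+1}P_k=a_nP_n+P_{n+1}.
\]
The key observation is the telescoping relation
\[
P_{n+1}=P_n\,\frac{a_n}{a_{n+1}-1},
\]
which is legitimate since $a_{n+1}\neq 1$. It is equivalent to $a_nP_n=(a_{n+1}-1)P_{n+1}$. Substituting this, we obtain
\[
a_nP_n+P_{n+1}=(a_{n+1}-1)P_{n+1}+P_{n+1}=a_{n+1}P_{n+1},
\]
which is \eqref{4.4} with $n$ replaced by $n+1$. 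This completes the induction.

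There is no genuine obstacle here. The only points needing care are that the empty product equals $1$, and that the hypothesis $a_{j,\ell}\neq 1$ is used only for $j\geq\ell+1$, which is exactly where the denominators appear.

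As a consistency check with Lemma~\ref{lem:2.2}, one can take $a_{k,\ell}$ to be the ratio of the right-hand side of \eqref{2.4} to the $k$-th summand on the left. Then the summand ratios should match $a_k/(a_{k+1}-1)$, and this recovers \eqref{2.4}. This check is not needed for the proof itself.
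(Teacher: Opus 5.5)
Your proof is correct. The base case needs only $a_{\ell,\ell}=1$ and the empty product. The induction step is just the recursion $a_nP_n=(a_{n+1}-1)P_{n+1}$, which gives $a_nP_n+P_{n+1}=a_{n+1}P_{n+1}$. Equivalently, $P_k=a_kP_k-a_{k-1}P_{k-1}$ for $k>\ell$, so the sum telescopes.

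The paper gives no proof of this theorem and defers it to \cite{DV3}, so there is nothing to compare against directly. Your argument is, however, exactly the abstract form of the induction in the proof of Lemma~\ref{lem:2.4}. There, the identity $4(n-\ell+1)(n+\ell)+(2\ell-1)^2=(2n+1)^2$ is your step $(a_{n+1}-1)+1=a_{n+1}$, multiplied through by $(2\ell-1)^2$, with $a_{j,\ell}=(2j-1)^2/(2\ell-1)^2$.
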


If we set $a_{j,\ell}=j^2/\ell^2$, resp.\ $(2j+1)^2/(2\ell+1)^2$, then after
some straightforward manipulations we recover the identities \eqref{2.4} and 
\eqref{2.10}, respectively.

It would be outside the scope of this paper to provide a proof of 
Theorem~\ref{thm:4.1} and give further results and applications. This can be
found in the forthcoming publication \cite{DV3}.

\end{document}